\newtheorem{theorem}{Theorem}
\newtheorem{lemma}{Lemma}
\title{A Product Rule for Triangular Numbers}
\author{David G Radcliffe}
\email{dradcliffe@gmail.com}
\begin{document}

\begin{abstract}
We prove that there are exactly five sequences, including the triangular numbers,
that satisfy the product rule $T(mn) = T(m) T(n) + T(m-1) T(n-1)$ for all $m, n \ge 1$.
\end{abstract}

\maketitle

\section{Introduction}

The $n$th triangular number\cite{sloane} is 
$$T(n) = 1 + 2 + \ldots + n = \frac12 n(n+1).$$
It represents the number of dots in a triangular arrangement of dots
with 1 dot in the first row, 2 dots in the second row, and so on.

The triangular numbers satisfy many interesting properties, including a \emph{product rule}:
$$T(mn) = T(m) T(n) + T(m-1) T(n-1).$$
This rule can be demonstrated visually by subdividing a triangle into smaller triangles. Figure 1 illustrates the case 
$T(20) = T(5) T(4) + T(4) T(3)$.

The square numbers $S(n) = n^2$ satisfy the analogous, but simpler, product rule
$S(mn) = S(m) S(n)$. It would be interesting to explore product rules for other
sequences of figurate numbers.

\begin{figure}
\centering
\begin{tikzpicture}[xscale=0.3,yscale=0.3]
  \foreach \x [evaluate=\x as \xmod using {int(mod(\x,5))}] in {0,...,19}
  {
    \foreach \y [evaluate=\y as \ymod using {int(mod(\y,5))}, evaluate=\ymod as \k using { \ymod > \xmod ? "black" : "red" }] in {0,...,\x}
    \draw [fill=\k] (\x - \y / 2, \y * 0.866) circle (0.3);
  }
\end{tikzpicture}
\caption{The 20th triangular number.}
\end{figure}

In this note, we determine all sequences of real numbers that satisfy the product rule for triangular numbers. This answers a question posed by Tanton\cite{Tanton}.
Our main result is the following:

\begin{theorem} \label{maintheorem} 
Each of the following sequences satisfies the product rule 
$$T(mn) = T(m) T(n) + T(m-1) T(n-1)$$ 
for all $m, n \ge 1$, and no other sequences satisfy this recurrence. 

\begin{enumerate}
\item $T(n) = 0$ for all $n \ge 0$.
\item $T(n) = \frac12$ for all $n \ge 0$.
\item $T(0) = 0$ and $T(2n) = T(2n-1) = n$ for all $n \ge 1$.
\item $T(3n) = T(3n+2) = 0$ and $T(3n+1) = 1$ for all $n \ge 0$.
\item $T(n) = \frac12 n(n+1)$ for all $n \ge 0$.
\end{enumerate}
\end{theorem}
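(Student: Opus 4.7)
My plan is as follows. The forward direction, that each of the five listed sequences satisfies the product rule, amounts to five routine algebraic verifications that I would handle case by case at the outset. For the converse I proceed in stages.

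The first stage is to exploit the $m=1$ substitution. The product rule at $m=n=1$ reads $T(1)=T(1)^2+T(0)^2$, and at $m=1$ with general $n\ge 1$ it reads
\[
T(n)\bigl(1-T(1)\bigr) = T(0)\,T(n-1).
\]
If $T(1)\ne 1$, this forces $T$ to be a geometric progression $T(n)=T(0)\,r^n$; plugging this into the product rule at $m=n=2$ pins $r$ down to a short list, leaving only the zero sequence~(1) and the constant-$\tfrac12$ sequence~(2). If instead $T(1)=1$, the first identity immediately gives $T(0)=0$. Thus I may assume $T(0)=0$ and $T(1)=1$ from here on, which restricts attention to sequences~(3), (4), (5).

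The second stage is to derive a structural recurrence for this normalized case. Writing $\alpha=T(2)$ and $\beta=T(3)$, the product rule at $m=2$ gives the doubling recurrence $T(2k)=\alpha T(k)+T(k-1)$, and comparing the two expressions for $T(4k)$ obtained from the factorizations $4k=2\cdot 2k$ and $4k=4\cdot k$ produces the companion identity
\[
T(2k-1)=T(k)+(\beta-\alpha)\,T(k-1).
\]
Together these express every $T(n)$ for $n\ge 2$ in terms of strictly smaller values, so the whole sequence is a function of $(\alpha,\beta)$ alone. It then remains to show that $(\alpha,\beta)\in\{(0,0),(1,2),(3,6)\}$, the three pairs corresponding to sequences~(4), (3), (5).

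To finish, I would impose the product rule at a handful of small composite~$n$ and, after using the two recurrences to reduce every $T$-value to a polynomial in $\alpha$ and $\beta$, read off polynomial constraints on the pair. Good candidates are the identity $T(9)=T(3)^2+T(2)^2$, the two factorizations of $T(18)$ as $2\cdot 9$ and $3\cdot 6$, and the comparison of $T(15)$ computed as $T(3)T(5)+T(2)T(4)$ with its value from the companion identity at $k=8$. I expect the main obstacle to be showing that the joint zero set of these polynomial constraints is exactly the three claimed pairs: the first two constraints alone leave a finite candidate set but admit spurious real solutions such as $(\alpha,\beta)=(-1,2)$ (and one arising from a real root of $2\alpha^3+\alpha-1$), and it is the third constraint that is designed to eliminate them. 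Once $(\alpha,\beta)$ is fixed, a direct induction on~$n$ using the two recurrences confirms that $T$ coincides with the explicit formula for the corresponding sequence, closing the proof.
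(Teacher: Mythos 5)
Your proposal follows essentially the same route as the paper: the $m=1$ substitution to reduce to $T(0)=0$, $T(1)=1$ (with the geometric-progression analysis disposing of the remaining cases), the doubling identities $T(2k)=T(2)T(k)+T(k-1)$ and $T(2k-1)=T(k)+(T(3)-T(2))T(k-1)$ obtained by comparing the two factorizations of $T(4k)$, and the elimination of $(T(2),T(3))$ via the constraints at $n=18$, $9$, and $15$ --- even the spurious solution at $T(2)=-1$ and the factor $2c^3+c-1$ you flag are exactly the ones that appear in the paper's computation. The plan is correct and matches the paper's proof in all essentials.
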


Note that the fourth sequence is equivalent to the triangular numbers modulo 3.

\section{Verifying the solutions}
\label{verify}
In this section, we prove that all of the sequences listed in Theorem 1 
satisfy the product rule for triangular numbers. Readers who prefer to
verify the solutions for themselves should feel free to skip to the next
section.

\begin{lemma}
\label{constant-zero}
If $T(n) = 0$ for all $n \ge 0$, then $T$ satisfies the product rule.
\end{lemma}

\begin{proof}
If $m, n \ge 1$ then $T(mn) = 0$ and
\begin{align*}
T(m) T(n) + T(m-1) T(n-1) =  0\cdot 0 + 0\cdot 0 = 0.
\end{align*}
\end{proof}

\begin{lemma}
\label{constant-half}
If $T(n) = \frac12$ for all $n\ge0$, then $T$ satisfies the product rule.
\end{lemma}
\begin{proof}
If $m, n \ge 1$ then $T(mn) = \frac12$ and
\begin{align*}
T(m) T(n) + T(m-1) T(n-1) = \tfrac12 \cdot \tfrac12 + \tfrac12 \cdot \tfrac12 = \tfrac12.
\end{align*}
\end{proof}

\begin{lemma}
\label{half-n}
If $T(0) = 0$ and $T(2n) = T(2n-1) = n$ for all $n \ge 1$, then $T$ satisfies the product rule.
\end{lemma}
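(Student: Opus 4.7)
My plan is a direct case analysis on the parities of $m$ and $n$. I would first observe that the piecewise definition can be written uniformly as $T(k) = \lceil k/2 \rceil$ for all $k \ge 0$; this single formula captures $T(0) = 0$ and $T(2n) = T(2n-1) = n$ simultaneously and makes the subsequent algebra cleaner. Since both sides of the product rule are symmetric in $m$ and $n$, only three cases need substantive treatment: $m,n$ both even, $m,n$ both odd, and one of each.

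For each case I would write $m$ and $n$ in the form $2a$ or $2a-1$ with $a \ge 1$, then compute both sides explicitly. In the even-even case $m = 2a$, $n = 2b$, the product $mn = 4ab$ is even with $T(mn) = 2ab$, while the right side evaluates to $ab + ab = 2ab$. In the mixed case $m = 2a$, $n = 2b - 1$, the product is even with $T(mn) = 2ab - a$, matching $ab + a(b-1)$ on the right. In the odd-odd case $m = 2a-1$, $n = 2b-1$, the product $mn = 4ab - 2a - 2b + 1$ is odd, so $T(mn) = 2ab - a - b + 1$, matching $ab + (a-1)(b-1)$ on the right.

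The only delicate point I anticipate is the boundary $a = 1$ or $b = 1$, where $T(m-1)$ or $T(n-1)$ reduces to $T(0) = 0$ rather than following the generic formula $T(2k) = k$ with $k \ge 1$. However, since $T(0) = 0$ is exactly what one would get by formally extending the pattern to $k = 0$, the calculations go through uniformly and no separate boundary case is needed. I expect no real obstacle; the proof is a mechanical verification once the uniform formula $T(k) = \lceil k/2 \rceil$ is in hand.
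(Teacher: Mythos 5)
Your proposal is correct and matches the paper's proof essentially verbatim: the same three-way parity case analysis with $m, n$ written as $2a$ or $2a-1$, and the same resulting computations. The observation that $T(k) = \lceil k/2 \rceil$ and the explicit check that the boundary $T(0)=0$ fits the pattern are minor additions that the paper leaves implicit.
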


\begin{proof}
If $m=2a$ and $n=2b$ then $T(mn) = T(4ab) = 2ab$ and
\begin{align*}
T(m) T(n) + T(m-1) T(n-1) = ab + ab = 2ab.
\end{align*}

If $m=2a$ and $n=2b-1$ then $T(mn) = T(2a(2b-1)) = a(2b-1)$ and
\begin{align*}
T(m) T(n) + T(m-1) T(n-1) = ab + a(b-1) = a(2b-1).
\end{align*}

If $m=2a-1$ and $n=2b-1$, then 
\begin{align*}
 T(mn)  = T(4ab-2a-2b+1) & = 2ab - a - b + 1 \text{ and} \\
 T(m) T(n) + T(m-1) T(n-1) & = ab + (a-1)(b-1) \\
                                           & = 2ab - a - b + 1.
\end{align*}
\end{proof}

\begin{lemma}
\label{triangular-mod-3}
If $T(3n) = T(3n+2) = 0$ and $T(3n+1)=1$ for all $n \ge 0$, then 
$T$ satisfies the product rule.
\end{lemma}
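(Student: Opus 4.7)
The plan is to prove this by case analysis on the residues of $m$ and $n$ modulo $3$, since the sequence $T$ is defined purely in terms of $k \bmod 3$: we have $T(k) = 1$ when $k \equiv 1 \pmod 3$ and $T(k) = 0$ otherwise. The left-hand side $T(mn)$ therefore only depends on $mn \bmod 3$, while the right-hand side depends on $(m \bmod 3,\ n \bmod 3)$. So the proof reduces to checking at most nine cases.

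First I would record the two observations
\[
T(m)T(n) = \begin{cases} 1 & \text{if } m \equiv n \equiv 1 \pmod 3, \\ 0 & \text{otherwise,}\end{cases}
\qquad
T(m-1)T(n-1) = \begin{cases} 1 & \text{if } m \equiv n \equiv 2 \pmod 3, \\ 0 & \text{otherwise,}\end{cases}
\]
since $m - 1 \equiv 1 \pmod 3$ exactly when $m \equiv 2 \pmod 3$. These two conditions are mutually exclusive, so the right-hand side of the product rule equals $1$ precisely when $(m,n) \equiv (1,1)$ or $(m,n) \equiv (2,2) \pmod 3$, and equals $0$ otherwise.

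Next I would compute $T(mn)$ by the same criterion. Since $1\cdot 1 \equiv 1$ and $2 \cdot 2 \equiv 4 \equiv 1 \pmod 3$, while any other product of residues modulo $3$ (involving a $0$, or a $1\cdot 2$) is not $\equiv 1 \pmod 3$, the left-hand side $T(mn)$ equals $1$ in exactly the same two residue cases and $0$ otherwise. Comparing with the previous step concludes the proof.

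I do not anticipate any real obstacle: the only thing to be careful about is explicitly listing all residue pairs to confirm that the mod-$3$ multiplication table gives $mn \equiv 1$ in exactly the two cases identified, and to make sure the argument covers $m = 1$ or $n = 1$ (where $m - 1 = 0$ or $n - 1 = 0$) correctly, which it does because $T(0) = 0$ is built into the definition.
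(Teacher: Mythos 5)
Your proposal is correct and takes essentially the same approach as the paper: a case analysis on the residues of $m$ and $n$ modulo $3$, checking that both sides of the product rule equal $1$ exactly when $(m,n) \equiv (1,1)$ or $(2,2) \pmod 3$ and $0$ otherwise. Your indicator-function packaging merely streamlines the paper's explicit enumeration of the residue-pair cases.
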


\begin{proof}
Let $m, n \ge 1$. If $m \equiv 0 \pmod{3}$, then $T(mn) = 0$ and
\begin{align*}
& T(m) T(n) + T(m-1) T(n-1) = 0 \cdot T(n) + 0 \cdot T(n-1) = 0.
\end{align*}
The case where $n \equiv 0 \pmod{3}$ is similar.
 
If $m \equiv n \equiv 1 \pmod{3}$, then $T(mn) = 1$ and
\begin{align*}
& T(m) T(n) + T(m-1) T(n-1) = 1 \cdot 1 + 0 \cdot 0 = 1. 
\end{align*}

If $m \equiv n \equiv 2 \pmod{3}$, then $T(mn) = 1$ and
\begin{align*}
& T(m) T(n) + T(m-1) T(n-1) = 0 \cdot 0 + 1 \cdot 1 = 1.
\end{align*}

If $m \equiv 1 \pmod{3}$ and $n \equiv 2 \pmod{3}$, then $T(mn) = 0$ and
\begin{align*}
& T(m) T(n) + T(m-1) T(n-1) = 1 \cdot 0 + 0 \cdot 1 = 0.
\end{align*}
The case where $m \equiv 2 \pmod{3}$ and $n \equiv 1 \pmod{3}$ is similar.

\end{proof}

\begin{lemma}
\label{triangular}
If $T(n) = \frac12 n(n+1)$ for all $n \ge 0$ 
then $T$ satisfies the product rule.
\end{lemma}

\begin{proof}
Let $m, n \ge 1$. Then $T(mn) = \frac12 mn(mn+1)$ and
\begin{align*}
& T(m) T(n) + T(m-1) T(n-1)  \\
& = \frac12 m(m+1) \frac12 n(n+1) + \frac12 (m-1)m \frac12 (n-1)n \\
& = \frac{mn}{4} \left( (m+1)(n+1) + (m-1)(n-1) \right) \\
& = \frac{mn}{4} (2mn + 2)\\
&= \frac12 mn(mn+1).
\end{align*}
\end{proof}

\section{Proof of completeness}
We will prove that the list of solutions from the previous solutions is complete. Let $T$ be any sequence
that satisfies the product rule $$T(mn) = T(m) T(n) + T(m-1) T(n-1)$$ 
for all $m, n \ge 1$. For the sake of brevity, we will
set $a = T(0)$, $b = T(1)$, $c =T(2)$, and $d = T(3)$.

\begin{lemma}
\label{b-zero-or-one}
The equation $b = b^2 + a^2$ holds. In particular, if $a=0$ then $b=0$ or $b=1$.
\end{lemma}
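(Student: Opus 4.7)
The plan is to obtain this identity by a direct substitution into the product rule: take $m = n = 1$. Then $mn = 1$ and $m-1 = n-1 = 0$, so the rule yields
\[
T(1) = T(1)T(1) + T(0)T(0),
\]
which in the shorthand $a = T(0)$, $b = T(1)$ is exactly $b = b^2 + a^2$.

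For the second assertion, substitute $a = 0$ to get $b = b^2$, i.e.\ $b(b-1) = 0$, so $b \in \{0, 1\}$.

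There is no real obstacle here; the whole content of the lemma is a single evaluation of the hypothesis at the smallest admissible pair $(m,n) = (1,1)$.
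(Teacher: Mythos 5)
Your proof is correct and is essentially identical to the paper's: both substitute $m = n = 1$ into the product rule to obtain $b = b^2 + a^2$, and then set $a = 0$ to conclude $b \in \{0,1\}$. Nothing further is needed.
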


\begin{proof}
Substituting $m=1$ and $n=1$ into the product rule gives $T(1) = T(1) T(1) + T(0) T(0)$, or $b = b^2 + a^2$.
If $a = 0$ then $b = b^2$, which implies that $b = 0$ or $b = 1$.
\end{proof}

\begin{lemma}
\label{must-be-zero}
The identity $T(n) = b T(n) + a T(n-1)$ holds for all $n \ge 1$.
If $a = 0$ and $b = 0$ then $T(n) = 0$ for all $n \ge 0$.
\end{lemma}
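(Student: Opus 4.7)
The plan is to obtain the identity by a single substitution into the hypothesis and then to read off the consequence when $a=b=0$. Specifically, I would set $m=1$ in the product rule
$$T(mn) = T(m)T(n) + T(m-1)T(n-1).$$
Using $T(1)=b$ and $T(0)=a$, the left side collapses to $T(n)$ and the right side becomes $bT(n)+aT(n-1)$, establishing the identity for every $n\ge 1$. For $n=0$ the claim is either vacuous or interpreted as the tautology $T(0)=a$; in any case no new calculation is required.

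For the second assertion, I would simply specialize the identity just proved. With $a=0$ and $b=0$ it reads $T(n)=0\cdot T(n)+0\cdot T(n-1)=0$ for each $n\ge 1$. Combined with $T(0)=a=0$, this gives $T(n)=0$ for all $n\ge 0$, which is exactly the conclusion.

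There is essentially no obstacle: both halves of the lemma follow by a one-line specialization of the hypothesis, the only subtlety being the bookkeeping at $n=0$, which is handled by observing that $T(0)=a$ is part of the notation rather than an identity to be verified. This lemma is a setup step that isolates the degenerate case $a=b=0$ (yielding solution (1) of Theorem 1) so that later arguments may assume $(a,b)\neq(0,0)$.
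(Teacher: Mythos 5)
Your proof is correct and follows essentially the same route as the paper: substitute $m=1$ into the product rule to get the identity, then specialize to $a=b=0$. The only difference is cosmetic --- the paper invokes induction on $n$ for the second claim, whereas you observe (correctly, and a bit more cleanly) that $T(n)=0\cdot T(n)+0\cdot T(n-1)=0$ follows directly for each $n\ge 1$ with no induction needed, and $T(0)=a=0$ handles the base index.
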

\begin{proof}
The identity is verified by substituting $m=1$ into the product rule.
It follows that if $a = b = 0$ then $T(n) = 0$ for all $n \ge 0$.
\end{proof}

\begin{lemma}
\label{must-be-half}
If $a \ne 0$ then $T(n) = 1/2$ for all $n\ge0$.
\end{lemma}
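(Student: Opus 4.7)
The plan is to first show that $a \ne 0$ forces $T$ to be a geometric sequence, and then to use the product rule once more to pin down the common ratio.

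First, I would observe that $b = 1$ is impossible: by Lemma 6, $b = 1$ gives $a^2 = 0$, contradicting $a \ne 0$. So $b \ne 1$. Next, I would apply Lemma 7, whose identity $T(n) = b T(n) + a T(n-1)$ rearranges to $(1-b) T(n) = a T(n-1)$. Dividing by $1 - b$ gives the one-step recursion $T(n) = r T(n-1)$ with $r = a/(1-b)$, so $T(n) = a r^n$ for all $n \ge 0$. From $T(1) = b$ one reads off $r = b/a$. I would also note that $b \ne 0$, since Lemma 6 otherwise forces $a = 0$; hence $r \ne 0$.

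Then I would substitute $T(n) = a r^n$ into the product rule. The right-hand side becomes $a^2 r^{m+n} + a^2 r^{m+n-2} = a^2 r^{m+n-2} (r^2 + 1)$, while the left-hand side is $a r^{mn}$. Dividing through by $a r^{m+n-2}$ (using $a, r \ne 0$) gives the compact identity
\[
r^{(m-1)(n-1) + 1} = a (r^2 + 1), \qquad m, n \ge 1.
\]
The right-hand side is constant, so the left-hand side must be too. Specializing to $m = n = 1$ (exponent $1$) and to $m = n = 2$ (exponent $2$), I get $r = r^2$, and since $r \ne 0$ this forces $r = 1$.

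With $r = 1$ the sequence $T(n) = a$ is constant, and the product rule at $m = n = 1$ becomes $a = 2 a^2$, so $a = 1/2$. Hence $T(n) = 1/2$ for all $n \ge 0$. The main obstacle is the substitution step: recasting the product rule in the factored form above is what makes the constraint $r = r^2$ fall out of just two evaluations; once that is done, the rest is bookkeeping.
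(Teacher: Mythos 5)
Your proof is correct and follows essentially the same route as the paper: use $b=b^2+a^2$ to get $b\ne 1$, turn Lemma 7's identity into the geometric form $T(n)=ar^n$ with $r=b/a$, and then use the product rule to force $r=1$ and $a=\tfrac12$. The only cosmetic difference is that you compare the instances $(m,n)=(1,1)$ and $(2,2)$ of the general identity $r^{(m-1)(n-1)+1}=a(r^2+1)$, whereas the paper fixes $m=2$ and observes that $r^n=a(r^2+1)$ cannot depend on $n$; these amount to the same constraint $r=r^2$.
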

\begin{proof}
Since $b = b^2 + a^2$, it follows that $b \ne 1$, so we can write the equation as
$$\frac{a}{1-b} = \frac{b}{a}.$$
The equation $T(n) = bT(n) + aT(n-1)$ from Lemma \ref{must-be-zero} implies that
$$\frac{T(n)}{T(n-1)} = \frac{a}{1-b} = \frac{b}{a}.$$
Therefore, $T(n) = a r^n$ for all $n \ge 0$, where $r = b/a$.

Substituting $m=2$ into the product formula gives
\begin{align*}
T(2n) &= T(2) T(n) + T(1) T(n-1) \\
ar^{2n} &= ar^2 \cdot ar^n + ar \cdot ar^{n-1} \\
r^{n} &= a(r^2 + 1)
\end{align*}
Since the right side is independent of $n$, it follows that $r = 1$, hence $T(n) = a$ for all $n \ge 0$.
But if $b = b^2 + a^2$ and $a = b \ne 0$, then $a = \frac12$.
Therefore, $T(n) = \frac12$ for all $n \ge 0$.
\end{proof}

\begin{lemma}
\label{four-terms}
If $a = 0$ and $b = 1$, then $T(n)$ can be computed recursively for all $n \ge 3$ by means of the identities
$T(2n) = c T(n) + T(n-1)$ and $T(2n-1) = T(n) + (d-c) T(n-1)$. In particular, $T(n)$ is a function of $c$ and $d$
for all $n\ge 3$.
\end{lemma}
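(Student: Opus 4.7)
My plan is to derive the two identities directly from the product rule and then combine them into a strong induction that expresses every $T(n)$ with $n \ge 3$ in terms of $c$ and $d$.

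The even identity comes essentially for free: substituting $m=2$ into the product rule gives $T(2n) = T(2)T(n) + T(1)T(n-1)$, and since $b = T(1) = 1$ this reduces immediately to $T(2n) = c\,T(n) + T(n-1)$ for every $n \ge 1$. Setting $n=2$ here (equivalently, taking $m=n=2$ in the original product rule together with $a=0$, $b=1$) also gives $T(4) = c^2 + 1$, which I will need below.

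The odd identity is the main obstacle, because a single substitution of a fixed value for $m$ into the product rule only reaches arguments that are multiples of $m$, so no such substitution produces $T(2n-1)$ uniformly in $n$. My plan is instead to compute $T(4n)$ in two different ways. Applying the even identity twice yields
\[ T(4n) = c\,T(2n) + T(2n-1) = c^2 T(n) + c\,T(n-1) + T(2n-1). \]
On the other hand, substituting $m=4$ in the product rule, together with $T(4) = c^2 + 1$ and $T(3) = d$, gives
\[ T(4n) = (c^2+1)\,T(n) + d\,T(n-1). \]
Equating these two expressions and cancelling $c^2 T(n) + c\,T(n-1)$ leaves $T(2n-1) = T(n) + (d-c)\,T(n-1)$, valid for all $n \ge 1$.

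With both identities in hand, the final claim follows by strong induction on $n \ge 3$. Any such $n$ can be written either as $2k$ or as $2k-1$ for some $k \ge 2$, and in both cases the corresponding identity expresses $T(n)$ in terms of $c$, $d$, and values $T(j)$ with $j < n$. Since the base values $T(0)=0$, $T(1)=1$, $T(2)=c$, and $T(3)=d$ are already functions of $c$ and $d$, the induction propagates and yields the result.
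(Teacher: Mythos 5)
Your proposal is correct and follows essentially the same route as the paper: both derive the even identity from the $m=2$ substitution and obtain the odd identity by computing $T(4n)$ in two ways (via $m=4$ and via iterating the $m=2$ case), then conclude by recursion on $n$. No gaps.
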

\begin{proof}
\begin{align*}
T(2n) &= T(2) T(n) + T(1) T(n-1) = c T(n) + T(n-1) \\
T(4) &= T(2) T(2) + T(1) T(1) = c^2 + 1 \\
T(4n) &= T(4) T(n) + T(3) T(n-1) = (c^2 + 1) T(n) + dT(n-1) \\
T(4n) &= T(2) T(2n) + T(1) T(2n-1) = cT(2n) + T(2n-1).
\end{align*}
Combining these equations yields
\begin{align*}
T(2n-1) &= T(4n) - cT(2n) \\
             &= (c^2+1) T(n) + d T(n-1) - c^2 T(n) - cT(n-1) \\
             &= T(n) + (d-c) T(n-1).
\end{align*}
Since $T(2n)$ and $T(2n-1)$ are linear combinations of previous terms for all $n \ge 2$,
and the coefficients are functions of $c$ and $d$, it follows that $T(n)$ is a function of $c$ and $d$
for all $n \ge 3$.
\end{proof}

\begin{lemma}
\label{three-terms}
If $a = 0$ and $b = 1$, then 
$$d = \frac{3c^3 + c}{c^2+2c-1}.$$
Consequently, $T(n)$ is uniquely determined for each $n \ge 2$ by the value of $c$ alone.
\end{lemma}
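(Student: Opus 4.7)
\emph{Proof plan.} The key observation is that $18$ admits two essentially distinct factorizations, $18 = 2 \cdot 9 = 3 \cdot 6$. The plan is to compute $T(18)$ by applying the product rule with each factorization and to equate the two resulting expressions in $c$ and $d$; the extra constraint this imposes should pin down $d$ as a rational function of $c$.

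I would begin by tabulating the needed values $T(4), T(5), T(6), T(8), T(9)$ as polynomials in $c$ and $d$, using the recursion from Lemma 9. A particularly convenient shortcut is the product rule with $m=n=3$, which yields the clean identity $T(9) = T(3)^2 + T(2)^2 = d^2 + c^2$; the other values follow immediately from the recursive formulas.

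Then I would expand and simplify the two evaluations
\[ T(18) = T(2)T(9) + T(1)T(8) = c(d^2 + c^2) + (c^3 + c + d) \]
and
\[ T(18) = T(3)T(6) + T(2)T(5) = d(cd + c) + c(d + cd - c^2). \]
Equating the two and cancelling the common $cd^2$ term, I expect the identity to collapse to the linear relation $d(c^2 + 2c - 1) = 3c^3 + c$, which rearranges to the claimed formula.

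The delicate point will be justifying the division by $c^2 + 2c - 1$. If this quantity were zero, the same equation would force $c(3c^2 + 1) = 0$; but $3c^2 + 1 > 0$ for real $c$, so $c = 0$, which does not satisfy $c^2 + 2c - 1 = 0$. Hence $c^2 + 2c - 1 \neq 0$, and we may solve uniquely for $d$. The concluding assertion that $T(n)$ depends only on $c$ for $n \geq 2$ then follows from Lemma 9: we have $T(2) = c$ by definition, and every $T(n)$ with $n \geq 3$ is a function of $c$ and $d$, hence of $c$ alone.
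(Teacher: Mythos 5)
Your proposal is correct and follows essentially the same route as the paper: computing $T(18)$ via the two factorizations $2\cdot 9$ and $3\cdot 6$, equating to obtain $(c^2+2c-1)d = 3c^3+c$, and then justifying the division. Your justification that $c^2+2c-1\neq 0$ (via $3c^2+1>0$ forcing $c=0$, which is not a root) is in fact slightly more explicit than the paper's bare assertion that the two polynomials share no roots.
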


\begin{proof}
Using the formulas from Lemma \ref{four-terms} we calculate as follows:

\begin{align*}
T(4) &= c^2 + 1 \\
T(5) &= T(3) + (d-c) T(2) = d + (d - c) c = d + cd - c^2 \\
T(6) &= cT(3) + T(2) = cd + c \\
T(8) &= cT(4) + T(3) = c(c^2+1) + d = c^3 + c + d \\
T(9) &= T(3) T(3) + T(2) T(2) = d^2 + c^2 \\
T(18) &= T(3) T(6) + T(2) T(5) = d(cd + c) + c(d + cd - c^2) \\
T(18) &= cT(9) + T(8) = c(d^2 + c^2) + (c^3 + c + d)
\end{align*}

Equating the last two expressions for $T(18)$ yields
\begin{align*}
& cd^2 + c^2d + 2cd - c^3 = cd^2 + 2c^3 + c + d \\
& c^2d + 2cd - d = 3c^3 + c \\
& (c^2 + 2c - 1)d = 3c^3 + c
\end{align*}

If $c^2 + 2c - 1 = 0$ then $3c^3 + c = 0$ as well. But the polynomials have no roots in common,
which is a contradiction. Therefore, we may divide by $c^2 + 2c - 1$, yielding
$$d = \frac{3c^3 + c}{c^2+2c-1}.$$
This implies that the value of $T(n)$ is determined by $c$ alone.
\end{proof}

\begin{lemma}
\label{zero-one-or-three}
If $a=0$ and $b=1$ then $c \in \{0, 1, 3\}$.
\end{lemma}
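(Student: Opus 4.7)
The plan is to combine Lemma~10's formula $d = (3c^3+c)/(c^2+2c-1)$ with two further polynomial identities on $(c,d)$, obtained by evaluating $T(9)$ and $T(15)$ in two independent ways. The resulting polynomial equation in $c$ alone will have only $\{0,1,3\}$ as real roots.

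First, I would derive the two auxiliary identities. For $T(9)$: the product rule with $m=n=3$ gives $T(9)=T(3)^2+T(2)^2=d^2+c^2$; the recursion $T(2n-1)=T(n)+(d-c)T(n-1)$ from Lemma~9 with $n=5$, together with $T(4)=c^2+1$ and $T(5)=d+cd-c^2$, gives $T(9)=(d+cd-c^2)+(d-c)(c^2+1)$. Equating yields
$$d^2 - (c^2+c+2)\,d + c(c+1)^2 = 0. \qquad (\ast)$$
For $T(15)$: the product rule gives $T(15)=T(3)T(5)+T(2)T(4)$, while the Lemma~9 recursion gives $T(15)=T(8)+(d-c)T(7)$ with $T(7)=c^2+1+d^2-cd$ and $T(8)=c^3+c+d$. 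Equating yields
$$d^3 - d^2 - 3cd^2 + 3c^2 d + 2d - c^3 - c = 0. \qquad (\dagger)$$

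Next, I would use $(\ast)$ to replace every $d^2$ (and hence $d^3=d\cdot d^2$) in $(\dagger)$, reducing it to a linear equation in $d$ that solves for $d$ as another rational function of $c$. Setting this new expression equal to Lemma~10's formula for $d$ and clearing denominators produces a polynomial identity in $c$ alone, which I expect to factor as
$$c\,(c-1)\,(c-3)\,(2c^4 + 3c^2 + 2) = 0.$$
The quartic $2c^4+3c^2+2$ is a quadratic in $c^2$ with discriminant $9-16<0$, so it is strictly positive for every real $c$. The real roots of the whole polynomial are therefore exactly $c \in \{0, 1, 3\}$, which is the required conclusion.

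The main obstacle will be the algebraic bookkeeping in this last step: substituting a rational function of $c$ into a cubic in $d$, reducing modulo $(\ast)$, clearing denominators, and then recognizing the factorization of the resulting degree-$7$ polynomial. Nothing is conceptually deep, but the manipulations must be carried out carefully to isolate the irreducible quartic factor whose positivity is what finally rules out any exotic real values of $c$.
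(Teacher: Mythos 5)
Your proposal is correct, and the computation you only sketch does come out as you predict: reducing your cubic relation from $T(15)$ modulo your quadratic relation from $T(9)$ gives the linear relation $(c^4-2c^3+2c^2-4c+4)\,d = c(c^4-c^2+2)$, and eliminating $d$ against Lemma~10's formula $d=(3c^3+c)/(c^2+2c-1)$ yields exactly $c(c-1)(c-3)(2c^4+3c^2+2)=0$; I verified the expansion, and the quartic factor is indeed strictly positive. The paper relies on the same two consistency conditions --- $T(9)$ and $T(15)$ each computed once by the product rule and once by the doubling recursion of Lemma~9 --- but organizes the elimination differently: it substitutes $d=(3c^3+c)/(c^2+2c-1)$ at the outset, uses a SymPy script to obtain two rational functions of $c$ that must vanish, and then must argue that the only \emph{common} roots of $c(c-3)(c-1)(c+1)(2c^3+c-1)$ and $c(c-3)(c-1)(8c^6-c^5+7c^4-4c^3+4c^2-3c+1)$ are $0,1,3$. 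Your route buys two things: it is a by-hand computation rather than a computer-algebra one, and it ends with a single polynomial whose extraneous factor is visibly positive, so you never have to rule out the real root of $2c^3+c-1$ by checking it against the sextic. One small point to tighten: rather than ``solving for $d$'' from the linear relation, which tacitly assumes $c^4-2c^3+2c^2-4c+4\neq 0$, eliminate $d$ between the two linear relations by cross-multiplication; the resulting polynomial identity in $c$ then holds unconditionally. (As it happens, $c^4-2c^3+2c^2-4c+4$ has no real zeros, so either phrasing can be made to work.)
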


\begin{proof}
By our previous results, we
may compute $T(n)$ recursively by the following formulas.
\begin{align*}
& T(0) = 0; T(1) = 1; T(2) = c \\
& T(3) = d = \frac{3c^3+c}{c^2+2c-1} \\
& T(2n) = cT(n) + T(n-1) \text{ for } n \ge 2 \\
& T(2n-1) = T(n) + (d-c) T(n-1) \text{ for } n \ge 3
\end{align*}

These formulas are implemented by the Python script in Figure 3.
The script uses SymPy\cite{SymPy}, a Python library for symbolic mathematics.
Since the last two expressions in the script must be zero, 
$c$ must satisfy the following equations:
\begin{align*}
 0 &= T(9) - T(3) T(3) - T(2) T(2) \\
    &= \frac{c(c - 3)(c - 1)(c + 1)(2c^3 + c - 1)}{(c^2 + 2c - 1)^2} \text{, and}\\
0  &= T(15) - T(3) T(5) - T(2) T(4) \\
    &= \frac{c(c - 3)(c - 1)(8c^6 - c^5 + 7c^4 - 4c^3 + 4c^2 - 3c + 1)}{(c^2+2c-1)^3}.
\end{align*}
The only solutions that are common to both equations are $c = 0, 1, 3$.
\end{proof}

\begin{figure}
\begin{lstlisting}[frame=single]
from sympy import Symbol, factor, simplify
c = Symbol('c')
d = (3*c**3 + c) / (c**2 + 2*c - 1)
T = [0, 1, c, d] + [0]*12
for n in range(4, 16):
    if n % 2 == 0:
        T[n] = simplify(c*T[n // 2] + T[n // 2 - 1])
    else:
        T[n] = simplify(T[n//2+1] + (d-c)*T[n//2])
print (factor(T[9] - T[3] * T[3] - T[2] * T[2]))
print (factor(T[15] - T[3] * T[5] - T[2] * T[4]))
\end{lstlisting}
\caption{Python script to compute $T(n)$ in terms of $c$.}
\end{figure}

\noindent {\bf Proof of Theorem \ref{maintheorem}.}
We know from Section \ref{verify} that all of the sequences listed in Theorem \ref*{maintheorem}
satisfy the product rule for triangular numbers.

Let $T$ be any sequence that satisfies the product rule for triangular numbers.
If $T(0) \ne 0$, then $T(n) = \frac12$ for all $n\ge0$ by Lemma \ref{must-be-half}.
If $T(0) = 0$, then either $T(1) = 0$ or $T(1) = 1$ by Lemma \ref{b-zero-or-one}.
In the first case, $T(n) = 0$ for all $n$ by Lemma \ref{must-be-zero}.

In the second case, $T(2) = 0$, $1$, or $3$ by Lemma \ref{zero-one-or-three}, 
and this value determines the value of $T(n)$ for all $n \ge 3$ by Lemma \ref{three-terms}.
If $T(2) = 0$, then $T(3n)=T(3n+2)=0$ and $T(3n)=1$ for all $n\ge0$ by Lemma \ref{triangular-mod-3}.
If $T(2) = 1$, then $T(2n) = T(2n-1) = n$ for all $n \ge 0$ by Lemma \ref{half-n}.
If $T(2) = 3$, then $T(n) = \frac12 n(n+1)$ for all $n \ge 0$ by Lemma \ref{triangular}.
These cases are mutually exclusive and collectively exhaustive, so the proof is complete.
See Figure \ref{flowchart}.
\hfill$\Box$

\usetikzlibrary{shapes.geometric, arrows}

\tikzstyle{startstop} = [rectangle, rounded corners, minimum width=3cm, minimum height=1cm,text centered, draw=black, fill=red!30]
\tikzstyle{process} = [rectangle, minimum width=3cm, minimum height=1cm, text centered, draw=black, fill=orange!30]
\tikzstyle{decision} = [diamond, minimum width=3cm, minimum height=1cm, text centered, draw=black, fill=green!30]
\tikzstyle{arrow} = [thick,->,>=stealth]

\begin{figure}
\label{flowchart}
\centering
\begin{tikzpicture}[node distance=2cm]
\node (isa0) [decision] {$T(0) = 0$?};
\node (case2) [startstop, right of=isa0, xshift=3cm] {Case 2 (Lemma \ref*{must-be-half})};
\node (isb0) [decision, below of=isa0, yshift=-1cm] {$T(1) = 0$?};
\node (case1) [startstop, right of=isb0, xshift=3cm] {Case 1 (Lemma \ref*{must-be-zero})};
\node (bis1) [process, below of=isb0, yshift=-0.5cm] {$T(1) = 1$ (Lemma \ref*{b-zero-or-one})};
\node (isc0) [decision, below of=bis1, yshift=-0.5cm] {$T(2) = 0$?};
\node (case4) [startstop, right of=isc0, xshift=3cm] {Case 4 (Lemmas \ref*{triangular-mod-3}, \ref*{three-terms})};
\node (isc1) [decision, below of=isc0, yshift=-1cm] {$T(2) = 1$?};
\node (case3) [startstop, right of=isc1, xshift=3cm] {Case 3 (Lemmas \ref*{half-n}, \ref*{three-terms})};
\node (cis3) [process, below of=isc1, yshift=-0.5cm] {$T(2) = 3$ (Lemma \ref*{zero-one-or-three})};
\node (case5) [startstop, below of=cis3] {Case 5 (Lemmas \ref*{triangular}, \ref*{three-terms})};

\draw [arrow] (isa0) -- node[anchor=south] {no} (case2);
\draw [arrow] (isa0) -- node[anchor=east] {yes} (isb0);
\draw [arrow] (isb0) -- node[anchor=south] {yes} (case1);
\draw [arrow] (isb0) -- node[anchor=east] {no} (bis1);
\draw [arrow] (bis1) -- (isc0);
\draw [arrow] (isc0) -- node[anchor=south] {yes} (case4);
\draw [arrow] (isc0) -- node[anchor=east] {no} (isc1);
\draw [arrow] (isc1) -- node[anchor=south] {yes}(case3);
\draw [arrow] (isc1) -- node[anchor=east] {no} (cis3);
\draw [arrow] (cis3) -- (case5);
\end{tikzpicture}
\caption{Flowchart for the proof of Theorem \ref*{maintheorem}.}
\end{figure}

\bibliography{mybib}{}
\bibliographystyle{plain}

\end{document}